\newtheorem{theorem}{Theorem}[section]
\theoremstyle{definition}
\newtheorem{corollary}[theorem]{Corollary}
\newtheorem{proposition}[theorem]{Proposition}
\newtheorem{question}[theorem]{Question}
\newtheorem{definition}[theorem]{Definition}
\newtheorem{example}[theorem]{Example}
\newtheorem{remark}[theorem]{Remark}
\newcommand{\Aut}{\operatorname{Aut}}
\newcommand{\GL}{\operatorname{GL}}
\newcommand{\id}{\mathrm{id}}
\title{EXTENSIONS OF  BRAID GROUP REPRESENTATIONS TO  THE MONOID OF SINGULAR BRAIDS
}
\author{Valeriy G. Bardakov, Nafaa Chbili, Tatyana A. Kozlovskaya}
\begin{document}
\maketitle
\begin{abstract}
Given a representation  $\varphi \colon B_n \to G_n$ of the braid group $B_n$,  $n \geq 2$ into a  group $G_n$, we are  considering the  problem of whether it is possible to extend this representation to a representation  $\Phi \colon SM_n \to A_n$, where $SM_n$ is  the singular braid monoid and   $A_n$ is an associative algebra, in which the group of units contains $G_n$. We also  investigate the   possibility of extending    the representation $\Phi \colon SM_n \to A_n$ to a representation $\widetilde{\Phi} \colon SB_n \to A_n$ of the singular braid group $SB_n$.
On the other hand, given two  linear representations $\varphi_1, \varphi_2 \colon H \to GL_m(\Bbbk)$ of a group $H$ into a general linear group over a field $\Bbbk$, we  define   the defect of one of these representations with  respect to the other. Furthermore, we  construct a  linear representation of $SB_n$ which is an extension of the Lawrence--Krammer--Bigelow representation (LKBR) and compute the defect of this extension  with respect to the  exterior product of two extensions of the Burau representation.
Finally, we discuss how to derive   an invariant of  classical links from
 the Lawrence--Krammer--Bigelow representation.

 \textit{Keywords}: Braid group, monoid of singular braids,  group of singular braids, representations,
 Artin representation, linear representations, Burau representation,
Lawrence--Krammer--Bigelow representation.

 \textit{Mathematics Subject Classification 2020}: 20E07, 20F36, 57K12.
\end{abstract}
\maketitle
\tableofcontents

\section{Introduction}

The monoid of singular braids or the Baez-Birman monoid, $SM_n$, $n \geq 2$,  was introduced independently by J. Baez in \cite{Baez} and J. Birman in  \cite{Bir}.
This monoid $SM_n$  is generated by the standard   generators $\sigma_1^{\pm 1},$ $\sigma_2^{\pm 1}$, $\ldots, \sigma_{n-1}^{\pm 1} $ of the braid group $B_n$ in addition to  the singular generators $\tau_1$, $\tau_2$, $\ldots, \tau_{n-1}$ depicted in Figure \ref{figure}.
 It is shown in \cite{FKR} that  the  monoid $SM_n$ embeds into a group $SB_n$ that is said to be  the singular braid group.
The reader is referred  to  \cite{DG, DG1, G, V3} for more on the singular braid monoid and the singular braid group.

It is well known that the Artin representation of $B_n$ may be used to calculate the
fundamental group of knot complements while the Burau representation can be used to calculate
the Alexander polynomial of knots. In \cite{G}, Gemein   studied extensions of the Artin representation
and the Burau representation to the singular braid monoid and the relation between them which is
induced by Fox free calculus.

In \cite{DG1} Dasbach and Gemein investigated extensions of the Artin representation $B_n \to \Aut(F_n)$ and the Burau representation $B_n \to GL_n(\mathbb{Z}[t, t^{-1}])$ to $SM_n$ and found connections between these representations.  They also  showed  that a certain linear representation of   $SM_3$ is faithful.

Just as with braids and classical links,  closing a singular braid yields a  singular link. Thus, the extensions of the Artin representation and the Burau representation give rise to invariants of singular knots. Gemein \cite{G} studied  invariants coming from the extended Artin
representation. Indeed, he obtained an infinite family of group invariants, all of them in relation with the
fundamental group of the knot complement.

Recall that a group $G$ is said to be \textit{linear} if there exists a faithful representation of $G$ into the general linear group ${\rm GL}_m(\Bbbk)$ for some integer $m\geq 2$ and a field $\Bbbk$. 
In \cite{Ver01}, linear representations of  the virtual braid groups $VB_n$, and the welded braid groups $WB_n$ into $\mbox{GL}_n(\mathbb{Z}[t, t^{-1}])$ were constructed. These representations extend  the  Burau representation.

The Lawrence-Krammer-Bigelow representation  is one of the most famous linear representations of the braid group $B_n$.  Lawrence  \cite{Law}, constructed a family of representations of $B_n$. It was shown in \cite{Kra, Big} that one of these representations
is faithful for all $n \in \mathbb{N}$. This leads to a  natural question regarding the  linearity of the singular braid group $SB_n$. It is worth mentioning here that  a  linear representation of   $SM_3$ which  is faithful was constructed in \cite{DG1}. This representation is  an extension of the Burau representation.

It is a natural approach to construct an extension of the Lawrence-Krammer-Bigelow representation to $SB_n$. In the present article we discuss the construction of  such  extension. Notice   that in \cite{BarP}, the first author    constructed a linear representation $\rho \colon VB_{n}\mapsto GL(V_{m})$, of the virtual braid group $VB_n$, where $V_m$ is a free module of dimension $m=n(n -1)/2$ with a basis $\left\{ v_{i,j}\right\}$, $1\leqslant i<j\leq n$. This representation is not  an extension of the  Lawrence-Krammer-Bigelow representation of $B_n$.

In his pioneering work \cite{Jo}, V.F.R. Jones constructed the HOMFLY polynomial $P(q, z)$, an isotopy invariant of classical knots and links, using the Iwahori--Hecke algebras $H_n(q)$, the Ocneanu trace  and the natural surjection of the classical braid groups $B_n$ onto the algebras $H_n(q)$. In \cite{JuLa2}
the Yokonuma--Hecke algebras have been used for constructing framed knot and link invariants
following the method of Jones.

The relation between singular knots and singular braids is just the same as in the classical
case. A lot of papers are dedicated to the construction of invariants of singular links. For instance,
the HOMFLY and Kauffman polynomials were extended to 3-variable polynomials of
singular links by Kauffman and Vogel \cite{KV}. The extended HOMFLY polynomial was recovered by the construction of traces on singular Hecke algebras \cite{PR}.
Juyumaya and Lambropoulou  \cite{JL} used a similar approach to define invariants of singular links.

 A generalization of the Alexander polynomial for oriented singular links and pseudo-links was introduced in \cite{NOS}.
The Alexander polynomials of a cube of resolutions (in Vassiliev's sense) of a singular
knot were categorified in \cite{A}. Moreover, a 1-variable extension of the Alexander polynomial for singular links was categorified in \cite{OSS}. The generalized
cube of resolutions (containing Vassiliev resolutions as well as those smoothings at double points which preserve the orientation) was categorified in \cite{OS}. On the other hand, Fiedler \cite{Fd} extended the Kauffman state models of the Jones and Alexander polynomials to the context of singular knots.

 A singular link can be regarded as an embedding in $\mathbb{R}^3$ of a four-valent graph with rigid vertices. We can think of such vertices as being
rigid disks with four strands connected to it which turn as a whole when we flip the vertex by 180 degrees. It is well-known that  polynomial invariants of  classical links extend (in various ways) to invariants of rigid-vertex isotopy of  knotted four-valent graphs.

In \cite{CCC} a homomorphism of $SM_n$ into  the Temperley--Lieb algebra was  constructed leading to  a polynomial invariant of singular links which is an
 extended Kauffman bracket.
Also, in \cite{CCC} it  was shown how to define this  invariant,  by interpreting
singular link diagrams as abstract tensor diagrams and employing a solution to the
Yang--Baxter equation. For  classical links, this was  done by Kauffman  in \cite{Ka1}.

The theory of singular braids is related  to the theory of pseudo-braids. In particular,
it  was proved in \cite{BJW}  that the monoid of pseudo-braids is isomorphic to the singular braid monoid.
Hence, the group of the singular braids is isomorphic to the group of pseudo-braids. On the other side, the theory of pseudo-links is a quotient of the theory of singular  links by the singular first Reidemeister move.

The paper is organized as follows. In Section \ref{Int}, we recall some basic  definitions and  facts on braid group, singular braid monoid, and  Artin and Burau representations.
In Section \ref{LKBR}, we shall discuss the extension of the LKBR to the singular braid monoid. Extensions of other braid group representations are discussed in Section  \ref{BGExt}.
In Section \ref{Defect}, we shall study the defect of the extension of the LKBR  with
respect to  the   exterior product of two extensions of the Burau representations. Finally, some open questions and directions for further research are given in Section \ref{OP}.

{\bf Notations.} In this paper, we shall use the following notations and conventions. If $\varphi_*$ is a
 representation of the braid group, where $*$ is some index, such as  $A$, $B$, $LKB$, etc.,  corresponding to   Artin, Burau, Lawrence-Krammer-Bigelow, and so forth, then $\Phi_*$ denotes an extension of this representation to the singular braid monoid $SM_n$. Here, extension means that $\Phi_* |_{B_n} = \varphi_*(B_n)$. If all $\Phi_*(\tau_i)$ are invertible, then we obtain   a representation of the singular braid group $SB_n$ that we shall denote by~$\widetilde{\Phi}_*$.

{\bf Acknowledgments}.

V.G. Bardakov and T.A. Kozlovskaya are supported by the Russian Science Foundation (RSF 24-21-00102) for work in sections 3 and 5. N. Chbili is  supported by United Arab Emirates University, UPAR grant No. G00004167 for work in sections 4 and 6.

\bigskip

\section{Basic definitions} \label{Int}

In this section we recall some basic definitions and results needed in the sequel. More details can be found in  \cite{Artin, Bir1, Mar}.

The braid group $B_n$, $n\geq 2$, on $n$ strands can be defined as
the  group generated by $\sigma_1,\sigma_2,\ldots,\sigma_{n-1}$ with the defining relations
\begin{equation}
\sigma_i \, \sigma_{i+1} \, \sigma_i = \sigma_{i+1} \, \sigma_i \, \sigma_{i+1},~~~ i=1,2,\ldots,n-2, \label{eq1}
\end{equation}
\begin{equation}
\sigma_i \, \sigma_j = \sigma_j \, \sigma_i,~~~|i-j|\geq 2. \label{eq2}
\end{equation}
The geometric interpretation of  $\sigma_i$, its inverse $\sigma_{i}^{-1}$ and the unit $e$ of $B_n$ are depicted  in  Figure~\ref{figure1}.
\begin{figure}[h]
\includegraphics[totalheight=8cm]{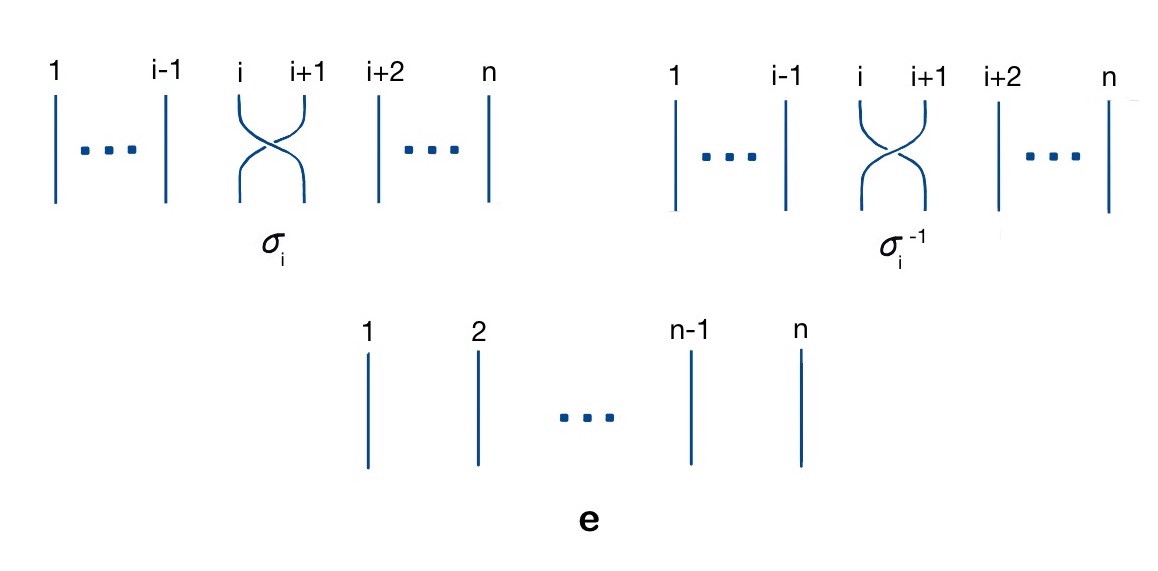}
\caption{The elementary braids $\sigma_i$, $\sigma_i^{-1}$ and the unit $e$.} \label{figure1}
\end{figure}

The group $B_n$ has a faithful representation into the automorphism group  ${\rm
Aut}(F_n)$ of the free group $F_n = \langle x_1, x_2, \ldots, x_n \rangle.$
In this case, the generator $\sigma_i$, $i=1,2,\ldots,n-1$, is mapped  to the automorphism
$$
\sigma_{i} \mapsto \left\{
\begin{array}{ll}
x_{i} \longmapsto x_{i} \, x_{i+1} \, x_i^{-1}, &  \\ x_{i+1} \longmapsto
x_{i}, & \\ x_{l} \longmapsto x_{l}, &  l\neq i,i+1.
\end{array} \right.
$$
This representation is known as the  Artin representation and is denoted hereafter  by $\varphi_A$.

Now, we shall define the {\it Burau representation}
$$
\varphi_B \colon B_n \longrightarrow GL(W_n)
$$
 of $B_n$, where $W_n$ is a free $\mathbb{Z}[t^{\pm 1}]$-module of rank $n$ with the basis $w_1, w_2, \ldots, w_n$.
The automorphisms $\varphi_B (\sigma_i)$, $i = 1, 2, \ldots, n-1$, of module $W_n$ act by the rule
$$
\varphi_B(\sigma_i) =
\left\{
\begin{array}{l}
w_i \longmapsto (1-t) w_i + t w_{i+1}, \\
w_{i+1} \longmapsto w_i, \\
w_k \longmapsto w_k,~~k \not=i, i+1.
\end{array}
\right.
$$

{\it The Baez--Birman monoid}
\cite{Baez, Bir} or {\it the singular braid monoid} $SM_n$ is generated
(as a monoid) by the elements $\sigma_i,$ $\sigma_i^{-1}$, $\tau_i$, $i = 1, 2, \ldots, n-1$.
The elements $\sigma_i,$ $\sigma_i^{-1}$  generate the braid group
$B_n$. The generators $\tau_i$  satisfy the defining relations
\begin{equation}
\tau_i \, \tau_j = \tau_j \, \tau_i, ~~~|i - j| \geq 2, \label{eq12}
\end{equation}
and the mixed  relations:
\begin{equation}
\tau_{i} \, \sigma_{j} = \sigma_{j} \, \tau_{i}, ~~~|i - j| \geq 2, \label{eq13}
\end{equation}
\begin{equation}
\tau_{i}  \, \sigma_{i} = \sigma_{i} \, \tau_{i},~~~ i=1,2,\ldots,n-1,  \label{eq14}
\end{equation}
\begin{equation}
\sigma_{i} \, \sigma_{i+1} \, \tau_i = \tau_{i+1} \, \sigma_{i}  \, \sigma_{i+1},~~~ i=1,2,\ldots,n-2,
 \label{eq15}
 \end{equation}
 \begin{equation}
\sigma_{i+1}  \, \sigma_{i} \, \tau_{i+1} = \tau_{i} \,
\sigma_{i+1} \, \sigma_{i}, ~~~ i=1,2,\ldots,n-2.
 \label{eq16}
\end{equation}

For a geometric interpretation of the elementary singular braid $\tau_i$ see Figure~\ref{figure}.

\begin{figure}[h]
\includegraphics[width=7cm, height=3.6cm]{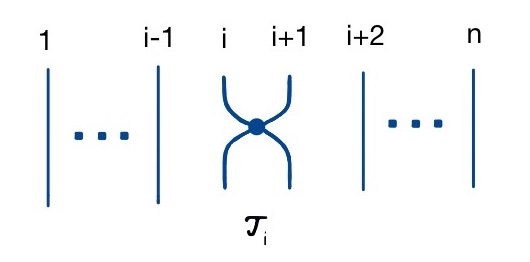}\\

\caption{The elementary singular braid  $\tau_i$.} \label{figure}

\end{figure}

It is proved by R. Fenn, E. Keyman and C. Rourke \cite{FKR} that the Baez-Birman monoid $SM_n$  is embedded into a group $SB_n$  which they call the  {\it singular braid group}.

\bigskip


\bigskip


\section{Extension of the Lawrence-Krammer-Bigelow representation}\label{LKBR}

The primary goal of this section is to find  extensions of the Lawrence-Krammer-Bigelow representation of the braid group $B_n$ to a representation of the singular braid monoid $SM_n$. In particular, we will  explicitely determine all such extensions in the cases $n=3$ and $n=4$.

Now, let us recall the definition of  the
 Lawrence-Krammer-Bigelow representation (LKBR for short) of the braid group $B_n$, see   \cite{Law, Kra, Big}. Let  $R =\mathbb{Z}[t^{\pm1}, q^{\pm1}]$ be the ring of Laurent polynomials on two variables $q$ and $t$ over the ring of integers. Let $V_m$ be a free module over $R$ with basis $\{v_{ij}\},$ $1 \leq i < j \leq n$. Then the   LKBR $\varphi_{LKB} \colon B_n \longrightarrow  GL(V_{m})$ is defined by action of $\sigma_i$, $i=1,2, \ldots, n-1$, on the basis of  $V_{m}$ as follows

\begin{equation}\label{LKBnnnn} \varphi_{LKB} (\sigma_i)(v_{k,l})= \begin{cases}
v_{k,l}, & \{k,l\} \cap \{i,i+1\} = \emptyset,\\
v_{i,l}, & k = i +1,\\
tq(q-1)v_{i,i+1}+(1-q)v_{i,l}+qv_{i+1,l} , & k = i \, \mbox{and} \, i+1< l,\\
tq^{2}v_{i,i+1},  &  k = i \, \mbox{and} \,  l=  i+1,\\
v_{k,i} ,  &  l = i+1 \, \mbox{and} \,  k<i,\\
(1-q)v_{k,i}+qv_{k,i+1}+q(q-1)v_{i,i+1}, &  l=i.\
\end{cases}
\end{equation}

As usual, we can present linear transformations $\varphi_{LKB} (\sigma_i)$ by matrices of size  $m \times m$ in the basis $v_{ij}$, $1 \leq i < j \leq n$. Notice that we are considering coordinates of vectors as  rows and the basis vectors of $V_m$ as  columns. We have  an isomorphism $\GL(V_n) \cong \GL_m(R)$, hence we  can consider LKBR as a homomorphism $\varphi_{LKB} \colon B_n \to \GL_m(R)$.

\begin{example} \label{Ex}
1) Under the representation $\varphi_{LKB} \colon B_3\longrightarrow GL_3(\mathbb{C})$  the  generators of $B_3$ are mapped  to the matrices,

$$ \sigma_{1} \mapsto \begin{pmatrix}
    tq^2	&  0	&0 \\
tq(q-1)	&1-q	&q \\
        0	&  1 &	0
\end{pmatrix},~~~ \sigma_{2} \mapsto \begin{pmatrix}
1-q	&q	&q(q-1) \\
  1	&0	   &   0 \\
  0	&0&	  tq^2
\end{pmatrix}. $$

2) Under the representation $\varphi_{LKB} \colon B_4 \longrightarrow GL_6(\mathbb{C})$ the  generators of $B_4$ are mapped  to the matrices,

$$ \sigma_{1} \mapsto \begin{pmatrix}
    tq^2 & 0 & 0 & 0 & 0 & 0  \\
    tq(q-1)& 1-q &  0& q & 0&0 \\
tq(q-1)& 0& 1-q & 0 &	q & 0\\
      0 & 1 & 0 & 0 & 0 & 0  \\
       0 & 0 & 1 & 0 &	0 & 0 \\
        0 & 0 & 0 & 0 &	0& 1
\end{pmatrix}, $$

$$ \sigma_{2} \mapsto \begin{pmatrix}
1-q	& q & 	0& 	  q(q-1)& 	  0& 	0 \\
  1	& 0	& 0	&         0	&   0& 	0 \\
  0	& 0	& 1	  &       0	&   0 & 	0 \\
  0	& 0 & 	0& 	    tq^2 & 	  0 & 	0 \\
  0 & 	0 & 	0 & 	tq(q-1) & 	1-q	& q \\
  0	& 0 & 	0	 &         0	&   1 & 	0
\end{pmatrix}, $$

$$ \sigma_{3} \mapsto \begin{pmatrix}
1	&  0 &	0	&  0	 &0	  &    0 \\
0	&1-q &	q&	  0	&0	&q(q-1) \\
0	&  1 &	0	&  0	&0	   &   0 \\
0	&  0	&0	&1-q&	q&	q(q-1) \\
0	&  0 &	0	&  1	&0	&      0 \\
0	&  0	&0	&  0	&0	&  tq^2
\end{pmatrix}. $$
\end{example}

To formulate our main result of this section, we will assume that the ring $R =\mathbb{Z}[t^{\pm1}, q^{\pm1}]$ is a subring of the complex numbers $\mathbb{C}$, where $t$ and $q$ are transcendental numbers over $\mathbb{Q}$ and $V_m$ is a vector space   over $\mathbb{C}$.

\begin{theorem} \label{TB}
Let $\varphi_{LKB} \colon B_n \longrightarrow  GL(V_{m})$ be the Lawrence-Krammer-Bigelow representation and  $u, v \in \mathbb{C}$.
Then the  map
$$
\Phi^{u,v}_{LKB} \colon SM_n \longrightarrow  GL(V_{m}),
$$
 which is defined on the generators  by the formulas
$$
\Phi^{u,v}_{LKB}(\sigma_{i}) =  \varphi_{LKB} \left( \sigma_{i} \right),
 $$
$$
\Phi^{u,v}_{LKB}(\tau_{i}) = u\varphi_{LKB} \left( \sigma_{i} \right)  +v e, ~~e = \id,
$$
defines a representation of $SM_n$ which  is an extension of the  LKBR of $B_n$. If all $\Phi^{u,v}_{LKB}(\tau_{i})$ are invertible, then we get a representation of the group $SB_n$. Moreover, for $n=3, 4$ any extension of the LKBR to $SM_n$ has this form.
\end{theorem}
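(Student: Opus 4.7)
The proof divides cleanly into two tasks: an \emph{existence} verification that $\Phi^{u,v}_{LKB}$ respects all defining relations of $SM_n$, and a \emph{uniqueness} computation in small rank showing that no other extension exists.

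\textbf{Existence.} Since $\Phi^{u,v}_{LKB}$ restricts to $\varphi_{LKB}$ on the braid subgroup, relations \eqref{eq1} and \eqref{eq2} hold automatically. The key observation is that $\Phi^{u,v}_{LKB}(\tau_i) = u\,\varphi_{LKB}(\sigma_i) + v\,\id$ is a degree-one polynomial in $\varphi_{LKB}(\sigma_i)$. Hence it commutes with any matrix that commutes with $\varphi_{LKB}(\sigma_i)$; in particular \eqref{eq14} follows immediately, and \eqref{eq12}, \eqref{eq13} both reduce to the commutation $\varphi_{LKB}(\sigma_i)\varphi_{LKB}(\sigma_j) = \varphi_{LKB}(\sigma_j)\varphi_{LKB}(\sigma_i)$ for $|i-j|\geq 2$. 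For the two remaining mixed relations \eqref{eq15}, \eqref{eq16} one expands, e.g.
\[
\varphi_{LKB}(\sigma_i\sigma_{i+1})\bigl(u\,\varphi_{LKB}(\sigma_i)+v\,\id\bigr) = u\,\varphi_{LKB}(\sigma_i\sigma_{i+1}\sigma_i) + v\,\varphi_{LKB}(\sigma_i\sigma_{i+1}),
\]
and compares with the analogous expansion of the right-hand side. The braid relation $\sigma_i\sigma_{i+1}\sigma_i = \sigma_{i+1}\sigma_i\sigma_{i+1}$ closes the argument. Invertibility of $\Phi^{u,v}_{LKB}(\tau_i)$ extends the map to $SB_n$.

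\textbf{Uniqueness for $n=3,4$.} Fix $n\in\{3,4\}$ and suppose $\Phi\colon SM_n\to \mathrm{GL}(V_m)$ is any extension of $\varphi_{LKB}$. Set $M_i := \Phi(\tau_i)$. My plan is: first use \eqref{eq14} to restrict $M_i$ to the commutant of $\varphi_{LKB}(\sigma_i)$; then use the triangle relations \eqref{eq15}, \eqref{eq16} to couple the $M_i$ across different $i$ and force the common linear form. Under the transcendence hypothesis on $q,t$, one computes the characteristic polynomial of $\varphi_{LKB}(\sigma_i)$ directly from the explicit matrices in Example~\ref{Ex}; the eigenvalues turn out to be $tq^2,\ 1,\ -q$ (with multiplicities $1,n-2,n-2$) so $\varphi_{LKB}(\sigma_i)$ is diagonalisable with three distinct eigenvalues, and its commutant has dimension $1+(n-2)^2+(n-2)^2$. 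After diagonalising, the commutant of $\varphi_{LKB}(\sigma_i)$ decomposes as a direct sum $\mathbb{C}\oplus \mathrm{Mat}_{n-2}(\mathbb{C})\oplus \mathrm{Mat}_{n-2}(\mathbb{C})$, so $M_i$ is parametrised by an eigenvalue $c_i\in\mathbb{C}$ together with two $(n-2)\times(n-2)$ blocks $A_i,B_i$.

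\textbf{Imposing the triangle relations.} Now plug $M_i, M_{i+1}$ into \eqref{eq15} and \eqref{eq16}; expressing everything in the eigenbasis of $\varphi_{LKB}(\sigma_i)$ (respectively $\varphi_{LKB}(\sigma_{i+1})$), each relation becomes a system of linear equations in the entries of $A_i,B_i,A_{i+1},B_{i+1}$, while leaving $c_i$ essentially free. The main obstacle is bookkeeping: one must rewrite each $M_{i+1}$ in the basis adapted to $\varphi_{LKB}(\sigma_i)$ and vice versa, which for $n=3$ amounts to solving a system on $3+3=6$ unknowns and for $n=4$ a system on $(1+1+1)+(1+1+1)=$ at most $12$ unknowns (the blocks are $1\times 1$ and $2\times 2$ respectively). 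My expectation, which can be checked by a direct symbolic computation using the matrices written out in Example~\ref{Ex}, is that the only solutions are those in which the matrix blocks $A_i, B_i$ are scalar, equal across $i$, and further proportional to the corresponding blocks of $\varphi_{LKB}(\sigma_i)$ plus a common multiple of identity. This forces
\[
M_i = u\,\varphi_{LKB}(\sigma_i) + v\,\id \qquad \text{for common }u,v\in\mathbb{C},
\]
completing the characterisation. The main difficulty is therefore not conceptual but computational: carrying out the linear-algebra reduction on the explicit $3\times 3$ and $6\times 6$ matrices and verifying that no extra solutions (depending on $q,t$) survive the transcendence hypothesis.
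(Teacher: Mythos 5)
Your existence argument is correct and is actually more transparent than the paper's, which merely asserts that the relations ``can be easily checked'': observing that $\Phi^{u,v}_{LKB}(\tau_i)$ is a linear polynomial in $\varphi_{LKB}(\sigma_i)$ does reduce \eqref{eq12}--\eqref{eq14} to commutation and \eqref{eq15}--\eqref{eq16} to the braid relation. The uniqueness half, however, contains both a concrete error and a genuine gap. The error: for $n=4$ the eigenvalues of $\varphi_{LKB}(\sigma_i)$ are $tq^2$, $1$, $-q$ with multiplicities $1$, $3$, $2$ --- each basis vector $v_{k,l}$ with $\{k,l\}\cap\{i,i+1\}=\emptyset$ contributes an extra eigenvalue $1$, so in general the multiplicity of $1$ is $n-2+\binom{n-2}{2}$, not $n-2$. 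Your count $1+2+2=5$ does not even reach $m=6$, and the commutant is $\mathbb{C}\oplus\mathrm{Mat}_3(\mathbb{C})\oplus\mathrm{Mat}_2(\mathbb{C})$, of dimension $14$ rather than $9$, so the linear system you propose is larger than you state. You also omit from the $n=4$ analysis the relations \eqref{eq12}--\eqref{eq13} between the index-$1$ and index-$3$ generators (e.g.\ $\tau_1\sigma_3=\sigma_3\tau_1$ forces $M_1$ to commute with $\varphi_{LKB}(\sigma_3)$ as well), which are additional constraints that must be imposed, or at least verified to be consequences of the others.

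The more serious problem is that the uniqueness claim is never actually established. After setting up the commutant parametrisation you write that your ``expectation, which can be checked by a direct symbolic computation,'' is that only the solutions $M_i=u\,\varphi_{LKB}(\sigma_i)+v\,\id$ survive --- but that is precisely the content of the assertion to be proved, so the proposal stops exactly where the work begins. Note that this is not automatic: since $\varphi_{LKB}(\sigma_i)$ is diagonalisable with three distinct eigenvalues, relation \eqref{eq14} alone already permits the three-parameter family $a\,\varphi_{LKB}(\sigma_i)^2+b\,\varphi_{LKB}(\sigma_i)+c\,\id$ when $n=3$, so the conjugation constraints $M_{i+1}=\varphi_{LKB}(\sigma_i\sigma_{i+1})M_i\varphi_{LKB}(\sigma_i\sigma_{i+1})^{-1}$ and its companion from \eqref{eq16} must genuinely cut the solution space down to two dimensions, and this has to be computed. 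The paper's own proof is admittedly also terse (``elementary but tedious calculations show\ldots''), but it at least records the resulting matrices for $\tau_1,\tau_2$ ($n=3$) and $\tau_1,\tau_2,\tau_3$ ($n=4$), i.e.\ the outcome of the computation; your proposal substitutes an unverified expectation for that outcome.
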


\begin{proof}
It can be easily checked that the  transformations $\Phi^{u,v}_{LKB}(\sigma_{i})$ and $\Phi^{u,v}_{LKB}(\tau_{i})$, $i = 1, 2, \ldots, n-1$ satisfy all defining relations of $SM_n$. Hence, $\Phi^{u,v}_{LKB}$ defines
a representation of $SM_n$. Obviously, if all transformations $\Phi^{u,v}_{LKB}(\sigma_{i})$ are invertible, then we get a linear representation of the singular braid group $SB_n$. Now, it remains to prove that in the cases $n=3,4$ any
 extension of the LKBR to $SM_n$ is of the form $\Phi^{u,v}_{LKB}$.

Let us consider the case $n=3$.  We shall proceed as follows. Take as images of $\tau_1$ and $\tau_2$ two matrices of size  $3 \times 3$ with 9 unknown entries. Then,   include these matrices with the images of $\sigma_1$ and  $\sigma_2$ under the LKBR (see  Example \ref{Ex}(1)), into the defining relations of $SM_3$.  Elementary but tedious calculations  show that the  images of $\tau_1$ and $\tau_2$ must be the following

$$ \tau_{1} \mapsto \begin{pmatrix}
  uq^2t+v	&        0	&  0 \\
utq(q-1)	 &u(1-q)+v &	uq \\
          0	 &       u	  &v \\
\end{pmatrix},~~~
\tau_{2} \mapsto \begin{pmatrix}
u(1-q)+v	&        uq	&  uq(q-1) \\
u	 &v &	0 \\
          0	 &       0	  &uq^{2}t+v\\
\end{pmatrix}. $$

In the case $n=4$,  using the same calculations as for the case $n=3$ and  the matrices from Example \ref{Ex}(2), we should be able to prove that

$$ \tau_{1} \mapsto \begin{pmatrix}
    utq^2+v & 0 & 0 & 0 & 0 & 0  \\
    utq(q-1)& u(1-q)+v &  0& uq & 0&0 \\
utq(q-1)& 0& u(1-q)+v & 0 &	uq & 0\\
      0 & u & 0 & v & 0 & 0  \\
       0 & 0 & u & 0 &	v & 0 \\
        0 & 0 & 0 & 0 &	0& u+v
\end{pmatrix}, $$

$$ \tau_{2} \mapsto \begin{pmatrix}
u(1-q)+v	& uq & 	0& 	  uq(q-1)& 	  0& 	0 \\
  u	& v	& 0	&         0	&   0& 	0 \\
  0	& 0	& u+v	  &       0	&   0 & 	0 \\
  0	& 0 & 	0& 	    utq^2+v & 	  0 & 	0 \\
  0 & 	0 & 	0 & 	utq(q-1) & 	u(1-q)+v	& uq \\
  0	& 0 & 	0	 &         0	&   u & 	v
\end{pmatrix}, $$

$$ \tau_{3} \mapsto \begin{pmatrix}
u+v	&  0 &	0	&  0	 &0	  &    0 \\
0	&u(1-q)+v &	uq&	  0	&0	&uq(q-1) \\
0	&  u &	v	&  0	&0	   &   0 \\
0	&  0	&0	&u(1-q)+v&	uq&	uq(q-1) \\
0	&  0 &	0	&  u	&v	&      0 \\
0	&  0	&0	&  0	&0	&  utq^2+v
\end{pmatrix}. $$

\end{proof}

\begin{remark}
One may ask whether it is possible to find conditions under which $\det(\Phi^{u,v}_{LKB}(\tau_{i})) \not= 0$. Indeed, using the relations
$$
\tau_{i+1} =  \sigma_{i}   \sigma_{i+1}  \tau_{i}  \sigma_{i+1}^{-1}  \sigma_{i}^{-1}, ~~i = 1, 2, \ldots, n-2,
$$
we see that in $SM_n$ all $\tau_i$ are conjugate with $\tau_1$. Hence,
$$
\det(\Phi^{u,v}_{LKB}(\tau_{1})) =\det(\Phi^{u,v}_{LKB}(\tau_{2})) = \ldots = \det(\Phi^{u,v}_{LKB}(\tau_{n-1})).
$$
It means that it is enough to find only $\det(\Phi^{u,v}_{LKB}(\tau_{1}))$ in $B_3$, $B_4$ and so on.

In $B_3$ we have
$$
\det(\Phi^{u,v}_{LKB}(\tau_{1})) = (u q^2 t + v) (v^2 + v u (1 - q) - u^2 q).
$$

In $B_4$ we have
$$\det(\Phi^{u,v}_{LKB}(\tau_{i}))=4tu^6+v^6-2quv^5+q^2tuv^5+3uv^5+q^2u^2v^4-6qu^2v^4-$$
$$-2q^3tu^2v^4+3q^2tu^2v^4+3u^2v^4+3q^2u^3v^3-6qu^3v^3+$$
$$+q^4tu^3v^3-6q^3tu^3v^3+3q^2tu^3v^3 +u^3v^3+3q^2u^4v^2-2qu^4v^2+$$
$$+3q^4tu^4v^2-6q^3tu^4v^2+q^2tu^4v^2+q^2u^5v+3q^4tu^5v-2q^3tu^5v.$$

\end{remark}

\begin{remark}
Theorem \ref{TB} implies  the existence  of extensions of the LKBR to the singular braid group $SB_n$. In contrast, it has been  proved in  \cite{BRep} that there are no extensions of the LKBR  to the virtual braid group $VB_n$ nor to  the welded braid group $WB_n$ for $n \geq 3$.
\end{remark}

\subsection{Burau representation} We shall now  show that  some analogous of Theorem \ref{TB} holds for the Burau representation. We will assume that the Burau representation is a~representation,
$$
\varphi_B \colon B_n \to \GL_n(\mathbb{Z}[t^{\pm 1}]) \leq \GL_n(\mathbb{C})
$$
 into the general linear group over the field $\mathbb{C}$. Here we  take as $t$ some transcendental over $\mathbb{Q}$.
It was proved in \cite{G} that any linear local homogeneous representation $\Phi_B \colon S{M_n} \to \GL_n(\mathbb{C})$ that is an extension of the Burau representation of $B_n$ can be defined  on the generators:
$$\Phi_B ({\sigma _i}) = \left( {\begin{array}{*{20}{c}}
{{E_{i - 1}}}&\vline& {\begin{array}{*{20}{c}}
{\;\;0}&{\;\;0}
\end{array}}&\vline& 0\\
\hline
\begin{array}{l}
0\\
0
\end{array}&\vline& {\begin{array}{*{20}{c}}
{1 - t}&t\\
1&0
\end{array}}&\vline& {\begin{array}{*{20}{c}}
0\\
0
\end{array}}\\
\hline
0&\vline& {\begin{array}{*{20}{c}}
{\;\;0}&{\;\;0}
\end{array}}&\vline& {{E_{n - i - 1}}}
\end{array}} \right),\;\;\;$$

\begin{center}
$$\Phi_B ({\tau _i}) = \left( {\begin{array}{*{20}{c}}
{{E_{i - 1}}}&\vline& {\begin{array}{*{20}{c}}
{\;\;\;0}&{\;\;\;\;\;\;\;\;0}
\end{array}}&\vline& 0\\
\hline
{\begin{array}{*{20}{c}}
0\\
0
\end{array}}&\vline& {\begin{array}{*{20}{c}}
{1 - t + at}&{t - at}\\
{1 - a}&a
\end{array}}&\vline& {\begin{array}{*{20}{c}}
0\\
0
\end{array}}\\
\hline
0&\vline& {\begin{array}{*{20}{c}}
{\;\;\;0}&{\;\;\;\;\;\;\;\;0}
\end{array}}&\vline& {{E_{n - i - 1}}}
\end{array}} \right),$$
\end{center}
where $a \in \mathbb{C}$. If $a \not= 1/2$, then we  get a representation of $SB_n$.

In \cite{KSS}, it  was proved that the representation  $\Phi_B \colon S{M_n} \to \GL_n(\mathbb{C})$ is
reducible.  Furthermore,  a  reduced representation $\Phi_B^r \colon S{M_n} \to \GL_{n-1}(\mathbb{C})$ was constructed and was  proved to be  irreducible.

A proof of the following proposition  is straightforward.

\begin{proposition}

The images of the generators $\sigma_i$ and $\tau_i$ in the representation $\Phi_B \colon {SM_n} \to \GL_n(\mathbb{C})$, are related by the formulas
$$
\Phi_B(\tau_i) =  (1-a) \varphi_B(\sigma_{i})+ a \cdot \id, ~~i = 1, 2, \ldots, n-1.
$$

\end{proposition}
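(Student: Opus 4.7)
The plan is to verify the claimed identity
\[
\Phi_B(\tau_i) = (1-a)\,\varphi_B(\sigma_i) + a \cdot \id
\]
by a direct block-by-block comparison of the two explicit matrix forms of $\Phi_B(\sigma_i)$ and $\Phi_B(\tau_i)$ displayed just above the statement. Both matrices are block-diagonal with an identity block $E_{i-1}$ on the top-left, an identity block $E_{n-i-1}$ on the bottom-right, and a single $2\times 2$ block in the $i$-th and $(i+1)$-th rows and columns; the matrix $\id$ is block-diagonal of the same shape. Hence the equation to be proved decouples into three independent block equalities.

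On the two outer identity blocks the equality $(1-a)E_k + a\,E_k = E_k$ matches the corresponding block of $\Phi_B(\tau_i)$ trivially, and the off-diagonal zero entries of both sides also match. All the content of the proposition therefore sits in the central $2\times 2$ block, where a one-line computation gives
\[
(1-a)\begin{pmatrix} 1-t & t \\ 1 & 0 \end{pmatrix} + a\begin{pmatrix} 1 & 0 \\ 0 & 1 \end{pmatrix} = \begin{pmatrix} 1-t+at & t-at \\ 1-a & a \end{pmatrix},
\]
which is precisely the central block prescribed for $\Phi_B(\tau_i)$.

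There is no real obstacle; the proposition amounts to observing that Gemein's one-parameter family of linear local homogeneous extensions of the Burau representation interpolates affinely between $\varphi_B(\sigma_i)$ (at $a=0$) and $\id$ (at $a=1$). This is directly parallel to the form $\Phi^{u,v}_{LKB}(\tau_i) = u\,\varphi_{LKB}(\sigma_i) + v\,e$ of Theorem \ref{TB} under the correspondence $u = 1-a$, $v = a$, and in that sense the proposition situates $\Phi_B$ as the Burau analogue of the LKBR extension constructed in Theorem \ref{TB}.
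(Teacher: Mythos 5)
Your verification is correct and is exactly the ``straightforward'' computation the paper has in mind (the paper omits the details entirely, remarking only that the proof is routine): the identity reduces to the central $2\times 2$ block, where $(1-a)(1-t)+a=1-t+at$, $(1-a)t=t-at$, $(1-a)\cdot 1=1-a$, and $(1-a)\cdot 0+a=a$ reproduce Gemein's matrix for $\Phi_B(\tau_i)$. The closing observation linking this to the form $u\,\varphi_{LKB}(\sigma_i)+v\,e$ of Theorem \ref{TB} via $u=1-a$, $v=a$ is a nice contextual remark consistent with the paper's framing.
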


\bigskip


\section{Extensions of the braid group representations}\label{BGExt}

Suppose that we have a representation $\varphi \colon B_n \to G_n$ of the braid group into a group $G_n$. In this section, we discuss whether it is  possible to extend this representation to a representation $\Phi \colon SM_n \to A_n$, where $A_n$  is an associative algebra such that $G_n$ lies in the group of units $A_n^*$.

\begin{proposition} \label{Rep}
Let $\varphi \colon B_n \to G_n$ be a representation of  the braid group $B_n$, $\Bbbk$ be a field and $a, b, c \in \Bbbk$.
Then a  map  $\Phi_{a,b,c} \colon SM_n \to  \Bbbk[G_n]$ which acts on the generators by the rule
$$
\Phi_{a,b,c}(\sigma_i^{\pm 1})=\varphi(\sigma_i^{\pm 1}), ~~~\Phi_{a,b,c}(\tau_i)=a \varphi (\sigma_i)+b \varphi (\sigma^{-1}_i)+ce, ~~i = 1, 2, \ldots, n-1,
$$
defines a  representation  of $SM_n$ into $\Bbbk[G_n]$. Here $e$ is the unit element of $G_n$.
\end{proposition}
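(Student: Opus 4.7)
The plan is to verify that the definition of $\Phi_{a,b,c}$ on generators respects each defining relation of $SM_n$, exploiting the fact that $\varphi$ already respects all relations of $B_n$ and that $\Phi_{a,b,c}(\tau_i)$ is a polynomial in $\varphi(\sigma_i)$. I would extend $\varphi$ to a ring homomorphism $\varphi_* \colon \Bbbk[B_n] \to \Bbbk[G_n]$ and observe that $\Phi_{a,b,c}(\tau_i) = \varphi_*(a\sigma_i + b\sigma_i^{-1} + ce)$; this reduces almost every verification to an identity in $\Bbbk[B_n]$, where it suffices to compute formally with $\sigma_i$'s.

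First, the braid-type relations (\ref{eq1})--(\ref{eq2}) among the $\sigma_i$ hold automatically because $\varphi$ is a representation. For the mixed commutation relations (\ref{eq12}) and (\ref{eq13}) with $|i-j|\geq 2$, I would use that (\ref{eq2}) implies $\varphi(\sigma_i)$ and $\varphi(\sigma_j)$ commute in $G_n$; hence $\varphi(\sigma_i^{\pm 1})$ and $\varphi(\sigma_j^{\pm 1})$ all commute, so any $\Bbbk$-linear combination of $\varphi(\sigma_i),\varphi(\sigma_i^{-1}),e$ commutes with $\varphi(\sigma_j)$ and with any such combination for $j$. Relation (\ref{eq14}) is immediate: $\Phi_{a,b,c}(\tau_i)$ lies in the commutative subalgebra $\Bbbk[\varphi(\sigma_i)]\subseteq \Bbbk[G_n]$, so it commutes with $\varphi(\sigma_i)$.

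The heart of the argument is relations (\ref{eq15}) and (\ref{eq16}), which I would restate in conjugation form: they assert
\[
(\sigma_i\sigma_{i+1})\,\tau_i\,(\sigma_i\sigma_{i+1})^{-1}=\tau_{i+1},\qquad (\sigma_{i+1}\sigma_i)\,\tau_{i+1}\,(\sigma_{i+1}\sigma_i)^{-1}=\tau_i.
\]
The braid relation $\sigma_i\sigma_{i+1}\sigma_i=\sigma_{i+1}\sigma_i\sigma_{i+1}$ yields the identities $(\sigma_i\sigma_{i+1})\sigma_i(\sigma_i\sigma_{i+1})^{-1}=\sigma_{i+1}$ and $(\sigma_{i+1}\sigma_i)\sigma_{i+1}(\sigma_{i+1}\sigma_i)^{-1}=\sigma_i$ inside $B_n$, hence the same identities after applying $\varphi$. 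Taking inverses gives the corresponding identities for $\sigma_i^{-1}$ and $\sigma_{i+1}^{-1}$. Since conjugation is a $\Bbbk$-algebra automorphism of $\Bbbk[G_n]$, it sends $a\varphi(\sigma_i)+b\varphi(\sigma_i^{-1})+ce$ to $a\varphi(\sigma_{i+1})+b\varphi(\sigma_{i+1}^{-1})+ce$ and vice versa, which are precisely $\Phi_{a,b,c}(\tau_{i+1})$ and $\Phi_{a,b,c}(\tau_i)$. This establishes (\ref{eq15}) and (\ref{eq16}).

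No single step is a serious obstacle; the only mild subtlety is recognizing that all conditions can be written as conjugation identities so that the formula for $\Phi_{a,b,c}(\tau_i)$ transports correctly under the braid-group action by conjugation. Once that viewpoint is adopted, everything reduces to identities already valid in $B_n$, so the verification is routine and the proof can legitimately be called straightforward.
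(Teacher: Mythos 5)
Your proof is correct and follows essentially the same approach as the paper: direct verification of the defining relations of $SM_n$ on generators, with the commutation relations reduced to commutativity of the $\varphi(\sigma_i^{\pm1})$ and the long relations reduced to the braid relation. The only (cosmetic) difference is that you package the verification of (\ref{eq15})--(\ref{eq16}) as the statement that conjugation by $\varphi(\sigma_i\sigma_{i+1})$ is an algebra automorphism carrying $a\varphi(\sigma_i)+b\varphi(\sigma_i^{-1})+ce$ to $a\varphi(\sigma_{i+1})+b\varphi(\sigma_{i+1}^{-1})+ce$, whereas the paper expands both sides and matches the $a$-, $b$- and $c$-terms individually.
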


\begin{proof}
We need to  verify  that the defining relations of $SM_n$ are mapped  to the defining relations of $\Bbbk[G_n]$. Since this is true for the  defining relations of $B_n$, we have   to check  the mixed relations and relations which involve  only the generators $\tau_i$ (see relations (\ref{eq12})--(\ref{eq16})). At first, let us consider the relation (\ref{eq12}),
\begin{equation*}
\tau_i \, \tau_j = \tau_j \, \tau_i, ~~~|i - j| \geq 2.
\end{equation*}
Acting by $\Phi_{a,b,c}$, we get the equality
$$
(a \varphi(\sigma_{i})+b \varphi(\sigma_i^{-1})+ce) (a \varphi(\sigma_{j})+b \varphi(\sigma_{j}^{-1})+ce)  = (a \varphi(\sigma_{j})+b \varphi(\sigma_{j}^{-1})+ce) (a \varphi(\sigma_{i})+b \varphi(\sigma_i^{-1})+ce).
$$
Since,
$$
\varphi(\sigma_{i}^{\pm}) \varphi(\sigma_{j}) = \varphi(\sigma_{j}) \varphi(\sigma_{i}^{\pm}),~~\varphi(\sigma_{i}^{\pm}) \varphi(\sigma_{j}^{-1}) =
\varphi(\sigma_{j}^{-1}) \varphi(\sigma_{i}^{\pm}),
$$
the needed  relation holds.  Relations (\ref{eq13})--(\ref{eq14}) can be checked in a  similar way.

Let us  check the long relation (\ref{eq15}) (the checking of the last relation (\ref{eq16}) is similar),
$$
\sigma_{i} \sigma_{i+1} \tau_i = \tau_{i+1} \sigma_{i} \sigma_{i+1}.
$$
Taking the images by $\Phi_{a,b,c}$   of  both sides, we get
$$
\varphi(\sigma_{i}) \varphi(\sigma_{i+1})(a \varphi(\sigma_{i})+b \varphi(\sigma_i^{-1})+ce) = (a \varphi(\sigma_{i+1})+b \varphi(\sigma_{i+1}^{-1})+ce) \varphi(\sigma_{i} \sigma_{i+1}),
$$
which is equivalent to the relation
$$
a \varphi(\sigma_{i}) \varphi(\sigma_{i+1}) \varphi(\sigma_{i})+b \varphi(\sigma_{i}) \varphi(\sigma_{i+1}) \varphi(\sigma_i^{-1})+c \varphi(\sigma_{i}) \varphi(\sigma_{i+1})e =
$$
$$
=a \varphi(\sigma_{i+1}) \varphi(\sigma_{i}) \varphi(\sigma_{i+1})+b \varphi(\sigma_{i+1}^{-1}) \varphi(\sigma_{i}) \varphi(\sigma_{i+1}) +c \varphi(\sigma_{i}) \varphi(\sigma_{i+1}) e.
$$
Taking  into consideration relations of $B_n$ and the fact that $\varphi$ is a representation,   we  can easily  see  that
$$\Phi_{a,b,c}(\sigma_{i} \sigma_{i+1} \tau_i) =\Phi_{a,b,c}( \tau_{i+1} \sigma_{i} \sigma_{i+1}).$$
\end{proof}

Let us give some  examples of  representations of this type.

{\it Birman representation.} Motivated by the study of  invariants of finite type (or Vassiliev invariants) of classical knots,  Birman \cite{Bir} introduced a  representation of $SM_n$ into the
 group algebra $\mathbb{C}[B_n]$ by the expression
$$
\sigma_{i}^{\pm 1} \mapsto \sigma_{i}^{\pm 1},~~ \tau_i \mapsto  \sigma_{i}-  \sigma_i^{-1}, ~~i = 1, 2, \ldots, n-1.
$$

It is easy to see that if we put in Proposition \ref{Rep}, $\varphi = \id$, $a = 1$, $b=-1$, $c=0$,
we get $\Phi_{1,-1,0}$ that is  the Birman representation.
Paris  \cite{P} proved that this representation  is faithful.

A natural question that arises here is the following:

\begin{question}
For what values of  $a, b, c \in \mathbb{C}$ the representation $\Phi_{a,b,c}$ is faithful?
\end{question}

Further, we can formulate  a question about the possibility of  extending  the   representation  $\Phi_{a,b,c}$ to  the singular braid group $SB_n$.  To construct a representation of $SB_n$, it is required  that the image of $\tau_i$ has  an inverse, for all $i \in \{ 1, 2, \ldots, n-1 \}$. Let

$$
B = \sigma_i(a \sigma_i + c) + b+e.
$$
Using the formula
$$
(e - A)^{-1} = e + A + A^2 + A^3 + \ldots,
$$

we get
$$
\Phi_{a,b,c}(\tau_i)^{-1} = (a \sigma_i + b \sigma_i^{-1} + c e)^{-1} = \sigma_i (e - B + B^2 - \ldots ).
$$

Hence, we obtain  a representation
$$
\tilde{\Phi}_{a,b,c} \colon SB_n \to \mathbb{C}[[B_n]].
$$

\begin{question}
For what values of $a, b, c \in \mathbb{C}$ the representation $\tilde{\Phi}_{a,b,c}$ is faithful?
\end{question}

\medskip



\medskip

\bigskip


\section{Comparing  LKBR and the exterior square of  Burau representation}\label{Defect}

Suppose that we have two representations
$$
\varphi, \psi \colon G \to \GL_l(\Bbbk)
$$
of a group $G$ into a general linear group over a field $\Bbbk$. In order to  compare  these two representations we  introduce the following definition.

\begin{definition}
The {\it additive defect} of an element $g\in G$ is the matrix  $d_g = \varphi(g) - \psi(g)$.
The  {\it multiplicative  defect} of an element $g \in G$ is  the matrix $k_g = \varphi(g)^{-1}  \psi(g)$.
\end{definition}

\subsection{Tensor product of two Burau representations}
Consider  the Burau representation
$$
\varphi_B \colon B_n \to \GL(W_n),
$$
where $W_n$ is a vector space over   $\mathbb{C}$ with a basis $w_1$, $w_2$, $\ldots$, $w_{n-1}$.
Let us take the  second exterior power $\wedge^{2}$ $W_n$ that is  the quotient of $W_n \otimes W_n$ by the subspace generated by the set $\left\{ w\otimes w\mid \  w \in W_n \right\}.$
The vector space $\wedge^{2}$ $W_n$ has a basis
$$
u_{ij} = e_i \wedge e_j,~~~1 \leq i < j \leq n.
$$
We will denote by  $\varphi_{DB} \colon B_n \to GL(\wedge^{2}W_n)$ the  homomorphism which is defined on the generators of $B_n$  by the rule
$$
\varphi_{DB}(\sigma_k)(u_{ij}) = \varphi_B(\sigma_k)(e_i) \wedge \varphi_B(\sigma_k)(e_j),~~~1 \leq i < j \leq n,
$$
where $\varphi_B$ is the Burau representation of $B_n$.

Using elementary  calculations, one can prove the following:

\begin{proposition}
The generators of $B_n$ act on $\wedge^{2}W_n$ by automorphisms,
 $$
\varphi_{DB} \left( \sigma_{i} \right) =\begin{cases}u_{k i}\mapsto (1-q)u_{ki}+qu_{k i+1},&k<i ;\\ u_{k i+1}\mapsto u_{k i},& k<i ;\\ u_{i i+1}\mapsto (1-q)u_{i i+1} ;&\\ u_{i l} \mapsto (1-q) u_{i l}+q u_{i+1 l},& i+1<l ;\\u_{i+1 l} \mapsto u_{i l}; &\\ u_{k l} \mapsto u_{k l},   &
\left\{ k,l \right\}\cap  \left\{ i+1,i \right\}=\emptyset,      \end{cases}
 $$
 for all $i=1, 2, \ldots,n-1$.
\end{proposition}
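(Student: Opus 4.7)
The plan is to verify the six cases of the piecewise formula by direct computation on the basis $\{u_{kl}\}_{1\le k<l\le n}$ of $\wedge^{2}W_n$. By construction, $\varphi_{DB}(\sigma_i)(u_{kl}) = \varphi_B(\sigma_i)(w_k) \wedge \varphi_B(\sigma_i)(w_l)$, and the Burau formulas recalled in Section \ref{Int} tell us that $w_i \mapsto (1-q)w_i + q w_{i+1}$, $w_{i+1}\mapsto w_i$, and $w_j \mapsto w_j$ for $j\notin\{i,i+1\}$. Thus the behaviour of $\varphi_{DB}(\sigma_i)$ on $u_{kl}$ is entirely determined by the intersection of $\{k,l\}$ with $\{i,i+1\}$, and each case is handled by a one-line substitution together with the antisymmetry of the wedge product.

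I would organise the verification exactly as the statement does. When $\{k,l\}\cap\{i,i+1\}=\emptyset$, both factors are fixed and $u_{kl}\mapsto u_{kl}$. For each of the four "boundary" cases --- $k<i$ with $l=i$, $k<i$ with $l=i+1$, $k=i$ with $l>i+1$, and $k=i+1$ with $l>i+1$ --- only one of the two wedge factors is altered, so substituting the appropriate Burau formula and using the identification $w_a\wedge w_b = u_{ab}$ (for $a<b$) yields the stated expression immediately. The remaining "central" case concerns $u_{i,i+1}=w_i\wedge w_{i+1}$, where both factors transform; expanding $\bigl((1-q)w_i+qw_{i+1}\bigr)\wedge w_i$ and using $w_i\wedge w_i=0$ together with $w_{i+1}\wedge w_i=-w_i\wedge w_{i+1}$ collapses the result to a scalar multiple of $u_{i,i+1}$, the scalar being the determinant of the $2\times 2$ Burau block restricted to $\mathrm{span}(w_i,w_{i+1})$.

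There is no real conceptual obstacle: the proposition is essentially a bookkeeping exercise. The one step demanding attention is the central case, where the sign conventions of the wedge product must be tracked carefully so that the scalar multiplying $u_{i,i+1}$ matches the coefficient appearing in the statement; all other cases amount to transcribing the Burau action across one slot of the wedge product.
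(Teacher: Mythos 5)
Your overall strategy is the right one, and it is exactly what the paper leaves implicit (the paper offers no proof beyond the phrase ``elementary calculations''): by the definition $\varphi_{DB}(\sigma_i)(u_{kl})=\varphi_B(\sigma_i)(w_k)\wedge\varphi_B(\sigma_i)(w_l)$, five of the six cases are one-line substitutions of the Burau formulas into a single slot of the wedge product, and they all check out against the stated formulas.

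The problem is the central case, which you correctly isolate as the delicate one but then do not actually close. You assert that the scalar multiplying $u_{i,i+1}$ is the determinant of the $2\times 2$ Burau block and that this ``matches the coefficient appearing in the statement.'' It does not. The block is $\left(\begin{smallmatrix}1-q & q\\ 1 & 0\end{smallmatrix}\right)$, whose determinant is $-q$, and the direct expansion confirms this:
$$
\bigl((1-q)w_i+qw_{i+1}\bigr)\wedge w_i \;=\; (1-q)\,w_i\wedge w_i + q\,(w_{i+1}\wedge w_i)\;=\;-q\,u_{i,i+1},
$$
whereas the proposition claims $u_{i,i+1}\mapsto(1-q)u_{i,i+1}$. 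So either the proposition (together with the matrices $g_i$ used later in Section 5, whose diagonal entries in the $u_{i,i+1}$ position are all $1-q$) contains an error in this one entry, or the authors are using some unstated normalization of $\varphi_{DB}$ different from the literal induced map on $\wedge^2 W_n$; under the definition actually given in the paper the coefficient is $-q$. Your proof as written papers over this mismatch by declaring agreement where there is none; to be complete it must either exhibit the convention that produces $1-q$ or record that the computed value is $-q$ and that the stated formula needs correcting in that case.
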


Notice  that the vector spaces  on which act the representations $\varphi_{LKB}$ and $\varphi_{DB}$ are isomorphic. We are interested  in investigating  the connection between these two  representations. We can reformulate the general definition of the defect  as follows.

\begin{definition}
The additive defect of an element $w \in B_n$ is an element
$$
d_w = \varphi_{DB}(w) - \varphi_{LKB}(w).
$$
The multiplicative  defect of an  element $w \in B_n$ is an element
$$
k_w = \varphi_{DB}(w)^{-1}  \varphi_{LKB}(w).
$$
\end{definition}

Let us find the defect of the generators $\sigma_i$. Denote  $g_i = \varphi_{LKB}(\sigma_i)$ and  $h_i = \varphi_{DB}(\sigma_i)$, then the additive defect of $\sigma_i$ is equal to
 $ d_i=h_i-g_i$, and the multiplicative defect is equal to
$k_i=g_i^{-1}h_i$.

\begin{proposition}
The following formulas hold

   $$
g^{-1}_{i}  :\begin{cases}u_{k i}\mapsto u_{k,i+1},&k<i ;\\ u_{k i+1}\mapsto \frac{1}{q} u_{k i}+\frac{q-1}{q} u_{k i+1}& k<i ;\\ u_{i i+1}\mapsto \frac{-1}{q-1} u_{i i+1} ;&\\ u_{i l} \mapsto u_{i+1 l},& i+1<l; \\u_{i+1 l} \mapsto \frac{1}{q} u_{i l}+\frac{q-1}{q} u_{i+1 l},   &  i+1<l;\\ u_{k l} \mapsto u_{k l}, &
\left\{ k,l \right\}\cap  \left\{ i+1,i \right\}=\emptyset.      \end{cases}
 $$

  $$
d_{i}  :\begin{cases}v_{k i}\mapsto q(q-1)v_{i,i+1},&k<i ;\\ v_{k i+1}\mapsto 0& k<i ;\\ v_{i i+1}\mapsto (tq^{2}+q-1)v_{i i+1} ;&\\ v_{i l} \mapsto tq(q-1) v_{i i+1};& \\v_{i+1 l} \mapsto 0; &\\ v_{k l} \mapsto v_{k l},   &
\left\{ k,l \right\}\cap  \left\{ i+1,i \right\}=\emptyset;     \end{cases}
 $$

    $$
k_i  :\begin{cases}w_{k i}\mapsto w_{k,i},&k<i ;\\ w_{k i+1}\mapsto w_{k i+1}+(q-1)w_{k+1 i+1}& k<i ;\\ w_{i i+1}\mapsto -\frac{tq^{2}}{q-1} w_{i i+1} ;&\\ w_{i l} \mapsto w_{i l},& i+1<l; \\w_{i+1 l} \mapsto t(q-1) w_{i i+1}+ w_{i+1 l},   &  i+1<l;\\ w_{k l} \mapsto w_{k l}, &
\left\{ k,l \right\}\cap  \left\{ i+1,i \right\}=\emptyset.      \end{cases}
 $$

\end{proposition}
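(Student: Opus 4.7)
The plan is to establish each of the three formulas by direct case analysis on the pair $(k,l)$ with $1 \leq k < l \leq n$, classified by its intersection with $\{i,i+1\}$. Six cases suffice: (a) $\{k,l\}\cap\{i,i+1\}=\emptyset$; (b) $k<i$ and $l=i$; (c) $k<i$ and $l=i+1$; (d) $k=i$ and $l=i+1$; (e) $k=i$ and $l>i+1$; (f) $k=i+1$ and $l>i+1$. Throughout I would use the explicit description of $g_i=\varphi_{LKB}(\sigma_i)$ from formula (\ref{LKBnnnn}) and the explicit description of $h_i=\varphi_{DB}(\sigma_i)$ from the preceding proposition, noting that although the representation spaces are named differently (via bases $v_{kl}$, $u_{kl}$, $w_{kl}$), they are canonically identified as free modules of rank $m=n(n-1)/2$.

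First I would verify the formula for $g_i^{-1}$. Rather than inverting a matrix, I would check that the proposed map $f$ satisfies $g_i\circ f=\id$ on each basis element. In cases (a) the verification is trivial; in case (d), where $v_{i,i+1}$ is an eigenvector of $g_i$, one obtains the correct scalar by dividing; in cases (b), (c), (e), (f), one verifies that the pairwise rules swap or rescale consistently. This short computation fixes the correct formula for $g_i^{-1}$ and in particular gives meaning to the scalar on $v_{i,i+1}$.

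Next, for the additive defect $d_i=h_i-g_i$, I would, for each of the six cases, literally subtract the two explicit expressions. The five cases in which $\{k,l\}\cap\{i,i+1\}$ is either empty or equal to $\{i+1\}$ in the relevant sense produce zero or reduce immediately, because on those basis elements $h_i$ and $g_i$ agree; the nontrivial contributions come precisely from the correction terms involving $v_{i,i+1}$ that appear in the LKBR rule (\ref{LKBnnnn}) but not in $\varphi_{DB}(\sigma_i)$. Finally, for the multiplicative defect $k_i=g_i^{-1}h_i$ one performs a two-step substitution in each of the same six cases: apply $h_i$ by the formula from the preceding proposition, then apply the formula for $g_i^{-1}$ just established, and collect the coefficients.

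The whole argument is conceptually routine; the only real obstacle is bookkeeping. In particular, because $v_{i,i+1}$ appears as a correction term in several rules defining $g_i$, it is crucial to track all contributions involving it when composing $g_i^{-1}$ with $h_i$ in cases (b), (c), (e), (f), and to handle the eigenvalue on $v_{i,i+1}$ in case (d) with care. Once these are organized, each of the three statements follows by straightforward substitution with no further ideas required.
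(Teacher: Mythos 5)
Your overall strategy --- a six-way case analysis on $\{k,l\}\cap\{i,i+1\}$, verifying $g_i^{-1}$ by checking $g_i\circ f=\id$ on each basis vector, then computing $d_i$ by subtraction and $k_i$ by substitution --- is exactly the ``routine calculation'' the paper invokes (its own proof is one sentence), so the method is the right one. However, as written your plan would not reproduce the stated formulas, because you have locked in two conventions that are incompatible with them. First, you take $g_i=\varphi_{LKB}(\sigma_i)$ and $h_i=\varphi_{DB}(\sigma_i)$, following the sentence preceding the proposition; but the displayed formulas (and the worked examples for $n=3,4$ that follow them) only hold with the roles reversed, i.e.\ $g_i=\varphi_{DB}(\sigma_i)$ and $h_i=\varphi_{LKB}(\sigma_i)$. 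Indeed the claimed $g_i^{-1}(u_{i,i+1})=\frac{-1}{q-1}u_{i,i+1}$ forces $g_i(u_{i,i+1})=(1-q)u_{i,i+1}$, which is $\varphi_{DB}(\sigma_i)$, whereas $\varphi_{LKB}(\sigma_i)(v_{i,i+1})=tq^2v_{i,i+1}$ would give $g_i^{-1}(v_{i,i+1})=t^{-1}q^{-2}v_{i,i+1}$; moreover $\varphi_{LKB}(\sigma_i)^{-1}$ necessarily carries correction terms in $v_{i,i+1}$ on the vectors $v_{k,i}$, $v_{i,l}$, which are absent from the stated $g_i^{-1}$. Likewise $d_i(v_{i,i+1})=(tq^{2}+q-1)v_{i,i+1}$ is $\varphi_{LKB}-\varphi_{DB}$ on that vector, the negative of what your assignment yields.

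Second, for $k_i=g_i^{-1}h_i$ you propose to ``apply $h_i$, then apply $g_i^{-1}$''. The paper's stated convention is that coordinate vectors are rows and matrices act on the right, so the matrix product $g_i^{-1}h_i$ is the linear map ``first $g_i^{-1}$, then $h_i$'' --- the opposite order. You can test this on the stated value $k_i(w_{i+1,l})=t(q-1)w_{i,i+1}+w_{i+1,l}$ for $i+1<l$: it equals $\varphi_{LKB}(\sigma_i)\bigl(\varphi_{DB}(\sigma_i)^{-1}(u_{i+1,l})\bigr)$, whereas composing in your order gives $w_{i+1,l}\mapsto w_{i+1,l}$ there and produces an extraneous $-tq\,w_{i,i+1}$ term on $w_{i,l}$. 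Much of this confusion originates in the paper itself, whose verbal definitions of $g_i$, $h_i$, $d_w$, $k_w$ contradict its own displayed matrices; but to actually carry out the proof you must first pin down the conventions from the formulas (take $g_i=\varphi_{DB}(\sigma_i)$, $h_i=\varphi_{LKB}(\sigma_i)$, $d_i=h_i-g_i$, and read $g_i^{-1}h_i$ with the row-vector composition order), after which your case-by-case computation does go through and matches the examples that follow the proposition.
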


\begin{proof}
The proof is straightforward using routine calculations.
\end{proof}

We shall now calculate the additive and multiplicative defects in the cases $n=3$ and $n=4$.
\begin{example}
In the case  $n=3$ we have
 $$
  g_{1} =\begin{pmatrix}
1-q	&  0	& 0 \\
  0	&1-q	 &q \\
  0	 & 1	& 0
\end{pmatrix}~~~
g_{1}^{-1} =\begin{pmatrix}
\frac{-1}{q-1}	&  0	&   0 \\
  0&  0	 &    1 \\
 0&\frac{1}{q}&\frac{q-1}{q}
\end{pmatrix}
~~~ h_{1} =\begin{pmatrix}
    tq^2&  0	& 0\\
qt(q-1) &1-q &q \\
        0 & 1 &	0
\end{pmatrix}.
$$
Hence, the multiplicative and additive defects are equal to

\begin{center}
$
k_1 =  g_{1}^{-1} h_{1}  =\begin{pmatrix}
 \frac{-tq^2}{q-1}&0&0\\
 0&	1&	0\\
  t(q-1)&0&1 \\
\end{pmatrix},
$
$d_1 = \begin{pmatrix}q^2t+q-1&0&0\\
qt(q-1)&0&0\\
0&0&0
\end{pmatrix}.
$
\end{center}

For the image of $\sigma_2$ we have
 $$
  g_{2} =\begin{pmatrix}

1-q &q&  0\\
  1&0	 & 0& \\
  0&	0&1-q

\end{pmatrix},~~
 g_{2}^{-1} =\begin{pmatrix}
  0&  1& 0 \\
\frac{1}{q}&\frac{q-1}{q}&  0 \\
  0&0&\frac{-1}{q-1}
\end{pmatrix},~~~ h_{2} =\begin{pmatrix}

1-q&q &q(q-1)\\
   1&	0& 0\\
   0&	0&tq^2

\end{pmatrix} $$

Hence,
\begin{center}
$k_2 =  g_{2}^{-1} h_{2}  =\begin{pmatrix}
1&0& 0 \\
0&1& q-1\\
0&0&\frac{-tq^2}{q-1}
\end{pmatrix},$
$
d_2 = \begin{pmatrix} 0&0&q(q-1)\\
0&0&0\\
0&0&q^2t+q-1.
\end{pmatrix}
$
\end{center}
\end{example}

\begin{example}
In the case  $n=4$ we have

 $$ g_{1} =\begin{pmatrix}
1-q& 0&0& 0&0	&0 \\
  0&1-q &0& q&0&0 \\
  0& 0&1-q& 0&q&0 \\
  0& 1&0&0&0&0 \\
  0& 0 &1&  0&	0&0 \\
  0&  0&0& 0&0	&1
\end{pmatrix},~~~
g_{1}^{-1} =\begin{pmatrix}
\frac{-1}{q-1}&0&0&0&0&0 \\
       0&0&0& 1& 0&0\\
       0&0&0&0&1&0\\
       0&\frac{1}{q}&0&\frac{q-1}{q} &0&0\\
       0&0&\frac{1}{q} &0&\frac{q-1}{q}&0\\
       0&0&0 &0& 0&1\\
\end{pmatrix}.
 $$

 $$ h_{1} =\begin{pmatrix}
    tq^2 & 0 & 0 & 0 & 0 & 0  \\
    tq(q-1)& 1-q &  0& q & 0&0 \\
tq(q-1)& 0& 1-q & 0 &	q & 0\\
      0 & 1 & 0 & 0 & 0 & 0  \\
       0 & 0 & 1 & 0 &	0 & 0 \\
        0 & 0 & 0 & 0 &	0& 1
\end{pmatrix}. $$

Hence, the multiplicative and additive defects are equal to

\begin{center}
 $ k_1=g_{1}^{-1} h_{1}  =\begin{pmatrix}

\frac{-tq^2}{q-1}	&0&0&0&0&0 \\
           0&1&0&0&0&0\\
           0&0&1&0&0&0\\
       t(q-1)&0&0	&1&0&0\\
       t(q-1)&0&0&0&1&0\\
           0&0&0&0&0&1
\end{pmatrix}, \,\,\,
$
$
d_1=\begin{pmatrix}
q^2t+q-1&0&0&0&0&0 \\
qt(q-1)&0&0&0&0&0 \\
qt(q-1)&0&0&	0&0&0 \\
0&0&0&0&0&0 \\
0&0&0&0&0&0 \\
0&0&0&0&0&0
\end{pmatrix}.
 $
\end{center}
Let us consider the image of $\sigma_2$. We have
 $$ g_{2} =\begin{pmatrix}

1-q&	q& 0&0&0&0\\
  1&0&0&	0&0&0\\
  0&0&1&0&0&0\\\
  0&0&0&1-q&0&0\\
  0&0&0&0&1-q&q\\
  0&0&0&	0&1&0

\end{pmatrix},~~~
 g_{2}^{-1} =\begin{pmatrix}
  0&1& 0&	0&0&0 \\
\frac{1}{q}&\frac{q-1}{q}&0&0&0&0\\
  0&0	&1&0&0&0\\
  0&0	&0&\frac{-1}{q-1}&0&0\\
  0&0	&0&0&0&1\\
  0&0	&0&0&\frac{1}{q}&\frac{q-1}{q}
\end{pmatrix} $$

 $$ h_{2} =\begin{pmatrix}
1-q& q & 0&q(q-1)&0&0 \\
  1&0&0&0&0&0 \\
  0&0	&1&0&0& 	0 \\
  0&0&0&tq^2 &0 & 0 \\
  0&0&0&tq(q-1)&1-q&q \\
  0&0&0&0&1&0
\end{pmatrix}. $$
Hence, the multiplicative and additive defects are equal to

 \begin{center}
 $ k_2= g_{2}^{-1} h_{2}  =\begin{pmatrix}
1&0&0&0&0&0 \\
0&1&0&q-1&0&0 \\
0&0&1&0&0&0 \\
0&0&0& \frac{-tq^2}{q-1} &0&0 \\
0&0&0&0&1&0 \\
0&0&0&t(q-1)&0&1
\end{pmatrix}, \,\,\, $
$ d_2=\begin{pmatrix}
0&0&0&q(q-1)&0&0 \\
0&0&0&0&0&0 \\
0&0&0&0&0&0 \\
0&0&0&q^2t+q-1&0&0 \\
0&0&0&qt(q-1)&0&0 \\
0&0&0&0&0&0
\end{pmatrix}. $
\end{center}

For the image of $\sigma_3$,

 $$ g_{3} =\begin{pmatrix}
1&0&0&0&0&0 \\
0&1-q&q&	0&0&0 \\
0&1&0&0&0&0 \\
0&0&0&1-q&q&0 \\
0&0&0&1&0&0 \\
0&0&0&0&0&1-q
\end{pmatrix},~~~
 g_{3}^{-1} =\begin{pmatrix}
1&0&0&0&0&0 \\
0&0&1&0&0&0\\
0&\frac{1}{q}&\frac{q-1}{q}&0&0&0\\
0&0&0&0&1&0 \\
0&0&0&\frac{1}{q}&\frac{q-1}{q}&0 \\
0&0&0&0&0&\frac{-1}{q-1}
\end{pmatrix},
 $$
 $$ h_{3} =\begin{pmatrix}
1& 0&0&0	&0&0 \\
0&1-q &q&0&0&q(q-1) \\
0&1&0&0&0&0 \\
0&0&0&1-q&q&	q(q-1) \\
0&0&0&1&0&0 \\
0&0&0&0&0&tq^2
\end{pmatrix}. $$
Hence, the multiplicative and additive defects are equal to

\begin{center}
 $ k_3 = g_{3}^{-1} h_{3}  =\begin{pmatrix}
1&0&0&0&0&0 \\
0&1&0&0&0&0 \\
0&0&1&0&0&q-1 \\
0&0&0&1&0&0 \\
0&0&0&0&1&q-1 \\
0&0&0&0&0&\frac{-tq^2}{q-1}
\end{pmatrix},
 $
$d_3 =\begin{pmatrix}
0&0&0&0&0&0 \\
0&0&0&0&0&q(q-1) \\
0&0&0&0&0&0 \\
0&0&0&0&0&q(q-1) \\
0&0&0&0&0&0 \\
0&0&0&0&0&q^2t+q-1
\end{pmatrix}.
 $
\end{center}

\end{example}

\begin{remark}
According to  \cite{BJW} the monoid of singular braids $SM_n$ is isomorphic to the monoid of pseudo braids $PM_n$ and the group of  singular braids $SB_n$ is isomorphic to the group of pseudo braids $PG_n$. Hence, all representations of $SM_n$ and $SB_n$ give representations of $PM_n$ and $PB_n$, respectively.
\end{remark}

\bigskip


\section{Open problems and directions for further research}\label{OP}

\subsection{From the Lawrence-Bigelow-Krammer representation to knot invariants}
Using the Burau representation of the braid groups one can define
the Alexander polynomial  which  is a knot invariant of classical knots. To the best of our knowledge, there are no knot  invariants defined
  from  the Lawrence-Bigelow-Krammer representation. We suggest the following construction of such invariants.

Let $B_{\infty} = \cup_{n=1}^{\infty} B_n$. For any $\beta \in B_{\infty}$,  define the polynomial
$$
f_{\beta} = f_{\beta}(q, t, \lambda) = det(\varphi_{LKB}(\beta) - \lambda \cdot id) \in \mathbb{Q}[q,t, \lambda],
$$
that is the characteristic polynomial which corresponds to the image of  $\beta$ by  the  Lawrence-Bigelow-Krammer representation $\varphi_{LKB}$. Let
$$
F = \{ f_{\beta} ~|~\beta \in B_{\infty} \} \subseteq \mathbb{Q}[q,t, \lambda]
$$
be the set of such characteristic polynomials. We define an equivalence relation on $F$ as follows:
$$
f_{\beta} \sim_M f_{\gamma} \Leftrightarrow ~\mbox{there is a sequence of Markov moves which transforms} ~\beta~ \mbox{into}~ \gamma.
$$
Using the Markov theorem one can prove the following:

\begin{proposition} \label{Inv}
The equivalence class $[f_{\beta}]$ under the equivalence relation $\sim_M$ is an  invariant of the knot $\hat{\beta}$ that is the closure of the braid $\beta$.
\end{proposition}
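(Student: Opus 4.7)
The plan is to reduce Proposition \ref{Inv} to Markov's theorem, which asserts that two braids $\beta \in B_n$ and $\gamma \in B_m$ (with $n, m$ possibly distinct) have ambient-isotopic closures in $S^3$ if and only if $\beta$ and $\gamma$ are related by a finite sequence of the two Markov moves: (M1) conjugation within a fixed $B_n$, and (M2) (de)stabilization $\beta \leftrightarrow \beta \sigma_n^{\pm 1}$ passing between $B_n$ and $B_{n+1}$.

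First I would check that $\sim_M$ is genuinely an equivalence relation on $F$. Reflexivity holds because $\beta$ is trivially Markov-equivalent to itself. Symmetry holds because both (M1) and (M2) are invertible moves. Transitivity is inherited from the transitivity of Markov equivalence on $B_\infty$: if $f_\beta \sim_M f_\gamma$ and $f_\gamma \sim_M f_\delta$, one can concatenate the two witnessing sequences of Markov moves through the common representative $\gamma$ to obtain a Markov sequence $\beta \to \delta$.

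Next, I would prove the well-definedness of the map $\hat{\beta} \mapsto [f_\beta]$. Suppose $\beta_1 \in B_n$ and $\beta_2 \in B_m$ satisfy $\hat{\beta}_1 \cong \hat{\beta}_2$. By Markov's theorem there is a finite sequence of Markov moves carrying $\beta_1$ to $\beta_2$, so by the very definition of $\sim_M$ we have $f_{\beta_1} \sim_M f_{\beta_2}$, whence $[f_{\beta_1}] = [f_{\beta_2}]$. This is exactly the statement that $[f_\beta]$ depends only on the isotopy class of $\hat{\beta}$.

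It is worth noting, although not required for the proof, that $f_\beta$ is already invariant under move (M1) in a strong sense: since $\varphi_{LKB}$ is a homomorphism, $\varphi_{LKB}(\alpha \beta \alpha^{-1})$ is conjugate to $\varphi_{LKB}(\beta)$ in $GL(V_m)$, and the characteristic polynomial is a conjugation invariant, so $f_{\alpha\beta\alpha^{-1}} = f_\beta$ as polynomials. The nontrivial identifications in the equivalence relation $\sim_M$ therefore come entirely from move (M2), where the ambient vector space $V_m$ changes dimension from $\binom{n}{2}$ to $\binom{n+1}{2}$ and the characteristic polynomials live in different degrees. Accordingly, the real content of Proposition \ref{Inv} is purely formal—it is essentially a tautology given Markov's theorem—and the main obstacle is not the proof but the subsequent question of whether $[f_\beta]$ can be realized concretely enough to distinguish knots; that question lies outside the statement and is left as the direction for further research flagged in Section \ref{OP}.
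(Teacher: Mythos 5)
Your argument is correct and is exactly the route the paper intends: the paper offers no written proof beyond the remark ``Using the Markov theorem one can prove the following,'' and your write-up simply fills in that reduction (well-definedness of the map $\hat{\beta}\mapsto[f_\beta]$ via Markov's theorem, plus the observation that conjugation invariance of the characteristic polynomial makes move (M1) act trivially). Nothing in your proposal diverges from the paper's approach in substance.
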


\begin{question}
Which  knots   it is   possible to distinguish using the  invariant $[f_{\beta}]$?
\end{question}

By properties  of characteristic polynomials, $f_{\beta}$  does not change under the first  Markov move, i.~e.   $f_{\beta} =f_{\alpha^{-1} \beta \alpha}$ for all $\alpha,\beta \in B_n$.
Let $L = \hat{\beta}$ be a link that is the closure of a braid $\beta$. Define the following set of polynomials
$$
F_L = \{ f_{\gamma}~|~\gamma \in B_{\infty} ~\mbox{can be constructed from}~ \beta ~\mbox{using Markov moves}\}.
$$
From Proposition \ref{Inv}, it  follows.

\begin{corollary}
The set of polynomials $F_L$  is an  invariant of the link $L = \hat{\beta}$.
\end{corollary}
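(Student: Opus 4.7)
The plan is to observe that the corollary is essentially a direct reformulation of Proposition \ref{Inv}, requiring only a little bookkeeping to pass between braids and their characteristic polynomials. I would first invoke Markov's theorem in its standard form: two braids $\beta \in B_n$ and $\beta' \in B_m$ have isotopic closures as oriented links in $S^3$ if and only if one can be transformed into the other by a finite sequence of Markov moves, that is, conjugation inside some $B_k$ and positive/negative stabilization $\gamma \leftrightarrow \gamma \sigma_k^{\pm 1}$. Every Markov move admits an inverse of the same type, so the relation ``reachable from $\beta$ by a finite sequence of Markov moves'' is reflexive, symmetric and transitive, hence an equivalence relation on $B_{\infty}$; denote the class of $\beta$ by $[\beta]_M$. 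By Markov's theorem, $[\beta]_M$ depends only on the link $L = \hat{\beta}$, not on the chosen braid representative.

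With this setup, the set $F_L$ in the statement is by definition $\{f_{\gamma} : \gamma \in [\beta]_M\}$, which coincides with the $\sim_M$-equivalence class $[f_{\beta}]$ appearing in Proposition \ref{Inv}. Equivalently, if $\beta'$ is any other braid with $\hat{\beta'} = L$, then $\beta' \in [\beta]_M$ forces $[\beta']_M = [\beta]_M$, so the set of characteristic polynomials built starting from $\beta'$ is literally $F_L$. Proposition \ref{Inv} then delivers the conclusion without further work: $F_L$ is a function of $L$ alone.

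There is no real obstacle in the proof itself; the substantive content is packaged in Markov's theorem (which is cited, not reproved) and in Proposition \ref{Inv}. The one place where one might be tempted to do extra work is in verifying that the single-braid formulation of the Markov equivalence relation extends consistently to the set of polynomials $F$ (since different braids may produce the same characteristic polynomial), but this is automatic from the definition of $\sim_M$, which is declared on $F$ via the braid relation. A genuinely harder question, which I would record as a remark rather than a proof step, is whether $F_L$ carries strictly more information than a single representative $f_{\beta}$ together with its Markov orbit, and whether any finite or effective description of the infinite set $F_L$ is available for computational purposes.
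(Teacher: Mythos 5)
Your proof is correct and follows essentially the same route as the paper, which simply derives the corollary from Proposition \ref{Inv} together with Markov's theorem; your write-up just makes explicit that $F_L$ is the image of the Markov equivalence class of $\beta$ under $\gamma \mapsto f_{\gamma}$ and hence depends only on the link. No further commentary is needed.
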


It is interesting to investigate  whether   it is  possible to find all polynomials in $F_L$. In the following example, we give some calculations.

\begin{example}
1) (Trivial knot) Let $\beta = \sigma_1 \sigma_2 \in B_3$ be a 3-strand braid. It is easy to check that its closure $\hat{\beta}$  is the trivial knot $U$. Also, one can see that the closure  of any of the 3-strand braids
$$
\sigma_1^{-1} \sigma_2,~~\sigma_1 \sigma_2^{-1},~~\sigma_1^{-1} \sigma_2^{-1},
$$
 gives the trivial knot. The corresponding polynomials have the form,
$$
f_{\sigma_1 \sigma_2}=q^6t^2-w^3,
$$

$$
f_{\sigma_1^{-1} \sigma_2}=(q^2t-q^2tw^3-qw^2+q^4t^2w^2-q^3t^2w^2-q^3tw^2+2q^2tw^2-qtw^2+w^2+qw-q^4t^2w+q^3t^2w+
$$
$$
+q^3tw-2q^2tw+qtw-w)/(q^2t),
$$

$$
f_{\sigma_1 \sigma_2^{-1}}=(q^2t-q^2tw^3-qw^2+q^4t^2w^2-q^3t^2w^2-q^3tw^2+2q^2tw^2-qtw^2+w^2+qw-q^4t^2w+q^3t^2w+
$$
$$
+q^3tw-2q^2tw+qtw-w)/(q^2t),
$$

$$
f_{\sigma_1^{-1} \sigma_2^{-1}}=(-q^6t^2w^3+1)/(q^6t^2).
$$

Also, the closure of the 4-strand braid $\sigma_1 \sigma_2  \sigma_3$ gives the trivial knot. For this braid,
$$
f_{\sigma_1 \sigma_2  \sigma_3}=q^{12}t^3+w^6-q^4tw^4-q^8t^2w^2.
$$

2) (Hopf link) Let $\beta = \sigma^2_1 \sigma_2 \in B_3$ be a 3-strand braid. It is easy to check that its closure $\hat{\beta}$  is the Hopf link $H$. We have
$$
f_{\sigma^2_1 \sigma_2}=-q^9t^3-w^3+q^3tw^2+q^6t^2w.
$$

3) (Trefoil knot) Let $\beta = \sigma^3_1 \sigma_2 \in B_3$ be a 3-strand braid. It is easy to check that its closure $\hat{\beta}$  is the trefoil knot  $T$. We have
$$
f_{\sigma^3_1 \sigma_2}=q^{12}t^4-w^3.
$$

\end{example}

\subsection{Extensions of the Artin representations}

In \cite{G} a family of extensions of the Artin representation of $B_n$ to the  monoid of the singular braids $SM_n$ is constructed.

\begin{question}
Is it possible to construct non-trivial extensions of the Artin representation of $B_n$ to the group of the singular braids $SB_n$? Is it possible to construct a faithful such representation?
\end{question}

\subsection{Representation into the Temperley--Lieb algebra.}
For each integer $n \geq 2$, the $n$-strand Temperley--Lieb algebra, denoted $TL_n$, is the unital, associative algebra over the ring $\mathbb{Z}[t, t^{-1}]$ generated by
$u_i$, for $1 \leq i \leq n-1$,  and subject to the following relations:

1) $u_i^2 = (-t^2 - t^{-2}) u_i$,   $1 \leq i \leq n-1$;

2) $u_i u_j u_i =  u_i$, for all  $1 \leq i, j \leq n-1$ with $|i - j| = 1$;

3) $u_i u_j = u_j u_i$, for all  $1 \leq i, j \leq n-1$ with $|i - j| > 1$.

In \cite{CCC}, it was proved that  for any $a, b \in \mathbb{Z}[t, t^{-1}]$ the map $\rho_{a, b} \colon SM_n \to TL_n$, which is defined on the generators by,
$$
\rho_{a,b}(\sigma_i) = t^{-1} u_i + t e,~~~\rho_{a,b}(\sigma_i^{-1}) = t u_i + t^{-1} e,~~~\rho_{a,b}(\tau_i) =  a u_i + b e,~~1 \leq i \leq n-1,
$$
where $e$ is the unit element of $TL_n$, is a representation of the singular braid monoid.

\begin{question}
Is it possible to extend $\rho_{a,b}$ to a representation of the group $SB_n$?
\end{question}

\bigskip


\end{document}